\documentclass[a4paper,reqno]{amsart}
\usepackage[T1]{fontenc}
\usepackage{lmodern}
\usepackage{mathrsfs}
\usepackage{amsmath,amssymb,amsthm}
\usepackage{mathtools}

\usepackage{upref}
\usepackage{enumerate}
\usepackage{indentfirst}

\usepackage{hyperref}%
\hypersetup{
hypertexnames=false, 
linkbordercolor=[rgb]{0.5, 1, 1},%
urlbordercolor=[rgb]{0.5, 1, 1},%
citebordercolor=[rgb]{0.5, 1, 0.5}}%
\usepackage{graphicx}
\usepackage{subcaption}

\usepackage{cite}
\theoremstyle{plain}
\newtheorem{theorem}{Theorem}
\newtheorem{lemma}[theorem]{Lemma}
\newtheorem{corollary}{Corollary}

\theoremstyle{remark}
\newtheorem{remark}{Remark}

\theoremstyle{definition}
\newtheorem{definition}{Definition}

\begin{document}

\title[Supercritical Branching Random Walks on Nested
Lattices]{Supercritical Branching Random Walks on Nested Lattices}

\author{Elena Yarovaya}


\address{Department of Probability Theory, Lomonosov Moscow State
University, Moscow, 119234, Russia}

\email{elena.yarovaya@math.msu.ru}

\keywords{Branching random walks, Nested lattices, Evolution operator,
Exponential growth of particle numbersx}

\subjclass[2020]{Primary 60J27; Secondary  60J80, 05C81, 60J85,}

\date{}

\begin{abstract}
  In classical models of supercritical continuous-time branching random 
  walks (BRWs) on a $d$-dimensional lattice, the dimension $d$ is typically 
  assumed to be fixed. When comparing the exponential growth rate 
  $\lambda_0$ of a particle population in a BRW with a single branching 
  source across lattices of different dimensions, a question arises whether 
  it is possible to construct a family of random walks on $d$-dimensional 
  lattices such that the model for a $(d+1)$-dimensional lattice 
  ``inherits'' the properties of the model for a $d$-dimensional lattice in 
  a meaningful sense. To address this question, the paper introduces the 
  concept of a ``random walk on nested lattices'' and investigates the 
  effect of the dimension $d$ on particle population growth using the 
  ``inheritance'' condition. For an arbitrary symmetric random walk, a 
  limit theorem is proved on the representation of the positive eigenvalue 
  of the evolution operator for the mean particle numbers in terms of the 
  process parameters and the dimension $d$, as the branching source 
  intensity $\beta \to \infty$. The theorem shows that the influence  of 
  dimension persists even for very strong branching.
\end{abstract} 

\maketitle
\section{Introduction}
Traditionally, models of branching random walks (BRW) on the lattice
$\mathbb{Z}^{d}$ are considered for a fixed lattice dimension $d$. At the
same time, ``dimension-inde\-pendent'' random walk models defined on lattices
of arbitrary dimension are well known---for instance, the random walk model
defined by the Laplace operator. For this model, the dependence of the random
walk operators' eigenvalues on the lattice dimension was investigated
in~\cite{YarGus}. The research method proposed in~\cite{YarGus} relied
heavily on the specific structure of the Laplace operator.

This raises the question: is it possible to describe---in ``reasonable
terms'' and in a ``sufficiently constructive and simple'' manner---a family
of random walk models on the lattices $\mathbb{Z}^{d}$ ($d\in\mathbb{N}$)
such that the model on $\mathbb{Z}^{d+1}$ inherits, in some natural sense,
the properties of the model on the lower-dimensional lattice
$\mathbb{Z}^{d}$, and such that all these models can be viewed as
realizations of a single ``dimension-independent'' model?

The article is structured as follows. Section~\ref{S:RW} introduces key
definitions regarding families of random walks consistent across nested
lattices, enabling the study of the ``inheritance'' of process properties
when transitioning to a lattice of higher dimensionality. Section~\ref{S:0}
reviews the properties of the Green's function, which represents the expected
time a random walk spends at a point in $\mathbb{Z}^{d}$. Section~\ref{S:5}
outlines the description of a supercritical branching random walk (BRW) with
a single particle generation source located at a point in $\mathbb{Z}^{d}$
(an inhomogeneous branching medium). Section~\ref{S:4} presents proofs of the
main results: Theorem~\ref{T:6}, concerning the monotonic decrease of the
supercritical BRW evolution operator's eigenvalues with respect to the
dimension $d$, and Theorem~\ref{T:7}, regarding the asymptotic expansion of
the principal eigenvalue for a supercritical BRW with a high-intensity
particle reproduction source. The concluding section,~\ref{Concl}, summarizes
the main results and discusses the impact of the spatial dimension $d$ on
particle population growth.

\section{On the Hereditary Properties of Random Walks}\label{S:RW}
Let us recall the necessary definitions and properties of a continuous-time
symmetric random walk on $\mathbb{Z}^{d}$ ($d\in \mathbb{N}$) without
particle generation at lattice sites (i.e., without branching); see, for
example,~\cite{Yar2025}. We are interested in random walks on lattices
$\mathbb{Z}^d$ of various dimensions $d$. Therefore, we reformulate the
axiomatic framework for random walks from~\cite{Yar2025} to account for this
factor: a random walk on the lattice $\mathbb{Z}^d$ is defined by a
transition rate matrix $A_{d}=(a_{d}(x,y))_{x,y\in \mathbb{Z}^d}$, the
elements of which satisfy the following conditions:
\begin{itemize}
  \item[$1^{(d)}$.] ${a_{d}(x,y)\ge0}$ for $x\ne y$ and $a_{d}(x,x)<0$,
  \item[$2^{(d)}$.] $\sum_{y} a_{d}(x,y)=0$,
  \item[$3^{(d)}$.] $a_{d}(x,y)=a_{d}(y,x)$, i.e., the random walk is
    symmetric,
  \item[$4^{(d)}$.] the random walk is irreducible, i.e., all points
    $y\in\mathbb{Z}^{d}$ are reachable,
  \item[$5^{(d)}$.] $\sigma^{2}=- \frac{1}{a_{d}(0,0)}\sum_{x\in
    \mathbb{Z}^{d}} |x|^{2}a_{d}(0,x)< \infty$, i.e., the jumps have finite
    variance,
  \item[$6^{(d)}$.] $a_{d}(x,y)=a_{d}(0,y-x)$, i.e., the random walk is
    spatially homogeneous.
\end{itemize}
\begin{remark}\label{R:1}
  It will be convenient to denote the function $a_{d}(0,x)$ by omitting the
  first zero argument:
  \[
    a_{d}(x):=a_{d}(0,x),\qquad x\in\mathbb{Z}^{d},
  \]
  and in certain cases, it will be convenient to use coordinate notation for
  this function:
  \[
    a_{d}(x)\equiv a_{d}(\underbrace{x_{1},x_{2},\ldots,x_{d}}_{d~
    \text{arguments}}),\qquad
    x=(x_{1},x_{2},\ldots,x_{d})\in\mathbb{Z}^{d}.
  \]
\end{remark}
\begin{definition}\label{def:multiRW}
  We say that random walks with intensity matrices
  \[
    A_{d}=(a_{d}(x,y))_{x,y\in
    \mathbb{Z}^d}\quad, d=1,2,\ldots\,,
  \]
  satisfying axioms $1^{(d)}$--$6^{(d)}$, form a \emph{nested
  lattice-consistent set of random walks} (or, more briefly,
    \emph{random walks
  on nested lattices}) if, for each $d=1,2,\ldots$, the relation
  \begin{equation}\label{eq:multidim}
    a_{d+1}(x_{1},\ldots,x_{d},0)= a_{d}(x_{1},\ldots,x_{d}),\qquad
    \forall~x=(x_{1},\ldots,x_{d})\neq0,
  \end{equation}
  holds.

  In the case where, for each $d=1,2,\ldots$, the more general relation
  \begin{equation}\label{eq:multidim2}
    a_{d+1}(x_{1},\ldots,x_{d},0)\ge a_{d}(x_{1},\ldots,x_{d}),\qquad
    \forall~x=(x_{1},\ldots,x_{d})\neq0,
  \end{equation}
  holds, the consistent set of random walks is called \emph{increasing}.
\end{definition}

Clearly, the concept of a consistent set of random walks is a special case of
the concept of an increasing consistent set of random walks. We have chosen
to retain the concept of a consistent set of random walks as a separate
category, since random walks defined by the Laplace operator possess
precisely this property.

Let $\phi_{d}(\theta)$ denote the Fourier transform of a random walk on
$\mathbb{Z}^{d}$ satisfying conditions $1^{(d)}$–$6^{(d)}$:
\[
  \phi_{d}(\theta):=\sum_{x\in\mathbb{Z}^{d}}a_{d}(x)e^{i\langle x, \theta
  \rangle}= \sum_{x\in\mathbb{Z}^{d}}a_{d}(x)(\cos{\langle x, \theta
  \rangle}-1),\qquad
  \theta\in[-\pi,\pi]^{d}.
\]
As is easily seen, $\phi_{d}(\theta)\leq 0$ for all
$\theta\in[-\pi,\pi]^{d}$. However, as shown in~\cite[Lemma~2.1.2]{Yar2025},
for a transient random walk (i.e., when axiom $4^{(d)}$ holds), a stronger
property applies:
\begin{equation}\label{eq:phi-positive}
  \phi_{d}(0)=0,\qquad \phi_{d}(\theta)<0,\quad
  \forall~\theta\in[-\pi,\pi]^{d},~\theta\neq0.
\end{equation}

\begin{lemma}\label{lem:1}
  If a collection of random walks with intensity matrices
  \[
    A_{d}=(a_{d}(x,y))_{x,y\in \mathbb{Z}^d},\quad d=1,2,\ldots\,,
  \]
  satisfying axioms $1^{(d)}$--$6^{(d)}$, forms a nested lattice-consistent
  collection of random walks, then
  \[
    |a_{d+1}(0)|\geq |a_{d}(0)|\qquad d=1,2,\ldots\,.
  \]
\end{lemma}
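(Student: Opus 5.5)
The plan is to express $|a_{d}(0)|$ through the off-diagonal rates using axiom $2^{(d)}$, and then compare the two sums termwise using the consistency relation~\eqref{eq:multidim}. By $1^{(d)}$ we have $a_d(0)<0$, and by $2^{(d)}$ and $6^{(d)}$,
\[
  |a_{d}(0)| = -a_d(0) = \sum_{x\in\mathbb{Z}^{d},\,x\neq0} a_{d}(x),
\]
where every summand is nonnegative by $1^{(d)}$. The same identity holds in dimension $d+1$:
\[
  |a_{d+1}(0)| = \sum_{z\in\mathbb{Z}^{d+1},\,z\neq0} a_{d+1}(z).
\]
Both series have nonnegative terms and converge, since they sum to a finite value. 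We may therefore rearrange them freely and drop terms.

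Next, split the index set $\mathbb{Z}^{d+1}\setminus\{0\}$ into two parts. The first is the ``hyperplane'' part, consisting of the points $z=(x_1,\ldots,x_d,0)$ with $x=(x_1,\ldots,x_d)\neq0$. The second is its complement, consisting of the points with $z_{d+1}\neq0$. Discarding the complement, which contributes a nonnegative amount, gives
\[
  |a_{d+1}(0)| \ge \sum_{x\in\mathbb{Z}^{d},\,x\neq0} a_{d+1}(x_1,\ldots,x_d,0).
\]
By~\eqref{eq:multidim}, each term on the right equals $a_d(x)$, so the right-hand side is exactly $|a_d(0)|$, which proves the claim.

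The argument only uses the inequality $a_{d+1}(x,0)\ge a_d(x)$. It therefore extends verbatim to increasing consistent sets in the sense of~\eqref{eq:multidim2}, which is worth remarking.

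There is no real obstacle here. The only points needing care are that the diagonal entry $a_{d+1}(0)$ is not covered by~\eqref{eq:multidim}, since that relation is stated only for $x\neq0$, and that we never need it. The decomposition also must exclude exactly the origin of $\mathbb{Z}^{d+1}$, which corresponds to $x=0$ in the hyperplane. Nonnegativity of the off-diagonal entries, which justifies dropping terms, is the key input.
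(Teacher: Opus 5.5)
Your proposal is correct and follows essentially the same route as the paper: write $|a_{d+1}(0)|$ as the sum of the nonnegative off-diagonal rates, discard the terms with $x_{d+1}\neq0$, and compare the remaining hyperplane terms with those of $a_d$. As your closing remark anticipates, the paper's own proof invokes the inequality~\eqref{eq:multidim2} at the last step, so it already covers increasing consistent sets.
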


\begin{proof}
  By virtue of axioms $1^{(d+1)}$ and $2^{(d+1)}$,
  \[
    |a_{d+1}(0)|= \sum_{\substack{x\in\mathbb{Z}^{d+1}\\x\neq0}}a_{d+1}(x),
  \]
  whence, using the coordinate representation of the vector
  $x=(x_{1},\ldots,x_{d},x_{d+1})$ on the right-hand side of the resulting
  equality, we obtain
  \[
    |a_{d+1}(0)|=
    \sum_{\substack{x\in\mathbb{Z}^{d+1}\\x\neq0}}a_{d+1}(x_{1},\ldots,x_{d},x_{d+1}).
  \]
  Let us discard the terms in the last sum for which $x_{d+1}\neq0$. Then the
  corresponding sum does not increase, and for the remaining terms in it (in
  which $x_{d+1}=0$), the condition ${x=(x_{1},\ldots,x_{d},0)\neq0}$ implies
  the condition ${x=(x_{1},\ldots,x_{d})\neq0}$, and therefore
  \[
    |a_{d+1}(0)|\geq
    \sum_{\substack{x\in\mathbb{Z}^{d}\\x\neq0}}a_{d+1}(x_{1},\ldots,x_{d},0),
  \]
  whence, by virtue of condition~\eqref{eq:multidim2} and axioms $1^{(d)}$ and
  $2^{(d)}$, we obtain:
  \[
    |a_{d+1}(0)|\geq
    \sum_{\substack{x\in\mathbb{Z}^{d}\\x\neq0}}a_{d+1}(x_{1},\ldots,x_{d},0)\ge
    \sum_{\substack{x\in\mathbb{Z}^{d}\\x\neq0}}a_{d}(x_{1},\ldots,x_{d})=|a_{d}(0)|.
  \]

  Lemma~\ref{lem:1} is proved.
\end{proof}

\begin{lemma}\label{lem:2}
  The Fourier transforms $\phi_{d}(\theta)$ of an increasing
  sequence of nested
  lattice-consistent random walks decrease monotonically for each $\theta$ as
  $d$ increases:
  \[
    \phi_{d+1}(\theta_{1},\ldots,\theta_{d},\theta_{d+1})\le
    \phi_{d}(\theta_{1},\ldots,\theta_{d}),\qquad\forall~\theta_{i}\in[-\pi,\pi].
  \]
\end{lemma}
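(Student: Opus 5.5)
We would prove Lemma~\ref{lem:2} directly from the cosine form of the Fourier transform, in the same spirit as the proof of Lemma~\ref{lem:1}. Write $\theta=(\theta_{1},\ldots,\theta_{d},\theta_{d+1})$ and $\theta'=(\theta_{1},\ldots,\theta_{d})$. By axioms $2^{(d+1)}$ and $3^{(d+1)}$ we have
\[
  \phi_{d+1}(\theta)=\sum_{\substack{x\in\mathbb{Z}^{d+1}\\x\neq0}}a_{d+1}(x)\bigl(\cos\langle x,\theta\rangle-1\bigr).
\]
The $x=0$ term vanishes, and every term with $x\neq0$ is nonpositive, since $a_{d+1}(x)\ge0$ by axiom $1^{(d+1)}$ and $\cos\langle x,\theta\rangle-1\le0$. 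The series converges absolutely because $\sum_{x\neq0}a_{d+1}(x)=|a_{d+1}(0)|<\infty$, so rearranging and discarding terms is legitimate.

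The first step is to discard all terms with $x_{d+1}\neq0$. Since these terms are nonpositive, removing them can only increase the sum, so
\[
  \phi_{d+1}(\theta)\le\sum_{\substack{x'\in\mathbb{Z}^{d}\\x'\neq0}}a_{d+1}(x',0)\bigl(\cos\langle x',\theta'\rangle-1\bigr).
\]
The key observation here is that for $x=(x',0)$ the inner product $\langle x,\theta\rangle$ equals $\langle x',\theta'\rangle$. The remaining expression therefore does not depend on $\theta_{d+1}$ at all.

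The second step applies condition~\eqref{eq:multidim2}, namely $a_{d+1}(x',0)\ge a_{d}(x')$ for $x'\neq0$. Multiplying both sides by the nonpositive factor $\cos\langle x',\theta'\rangle-1$ reverses the inequality:
\[
  a_{d+1}(x',0)\bigl(\cos\langle x',\theta'\rangle-1\bigr)\le a_{d}(x')\bigl(\cos\langle x',\theta'\rangle-1\bigr).
\]
Summing over $x'\neq0$ gives the value $\phi_{d}(\theta')$ on the right-hand side, where again the $x'=0$ term contributes nothing. Combining the two steps yields $\phi_{d+1}(\theta)\le\phi_{d}(\theta')$.

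There is no real obstacle in this argument. The only points needing care are the sign bookkeeping, since each step uses the nonpositivity of $\cos(\cdot)-1$, and the justification for manipulating the infinite sums, which follows from the absolute convergence noted above.
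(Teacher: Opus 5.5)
Your proof is correct and follows the paper's argument: write $\phi_{d+1}$ as a sum of nonpositive terms over $x\neq0$, drop the terms with $x_{d+1}\neq0$, then compare with $\phi_{d}$ via \eqref{eq:multidim2}. Your second step is in fact more careful than the paper's. The paper writes an equality there, which holds only for a consistent family; for an increasing family, only the inequality you derive from the nonpositivity of $\cos\langle x',\theta'\rangle-1$ is justified.
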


\begin{proof}
  By virtue of axiom $2^{(d+1)}$,
  \[
    a_{d+1}(0)= -\sum_{\substack{x\in\mathbb{Z}^{d+1}\\x\neq0}}a_{d+1}(x)
  \]
  and, therefore,
  \begin{align*}
    \phi_{d+1}(\theta)&
    =\sum_{x\in\mathbb{Z}^{d+1}}a_{d+1}(x)e^{i\langle x, \theta \rangle}=
    \sum_{\substack{x\in\mathbb{Z}^{d+1}\\x\neq0}}a_{d+1}(x)(\cos\langle
    x, \theta \rangle-1),
  \end{align*}
  whence, using the coordinate representations of the vectors
  \[
    x=(x_{1},\ldots,x_{d},x_{d+1}),\quad
    \theta=(\theta_{1},\ldots,\theta_{d},\theta_{d+1})
  \]
  on the right-hand side of the resulting equality, we obtain
  \begin{multline*}
    \phi_{d+1}(\theta_{1},\ldots,\theta_{d},\theta_{d+1})= \\
    =\sum_{\substack{x\in\mathbb{Z}^{d+1}\\x\neq0}}a_{d+1}(x_{1},\ldots,x_{d},x_{d+1})
    (\cos(x_{1}\theta_{1}+\cdots+x_{d}\theta_{d}+x_{d+1}\theta_{d+1}) -1).
  \end{multline*}
  Let us now discard the terms in the last sum for which $x_{d+1}\neq0$. Then
  the corresponding sum will not decrease, and for the remaining terms in it
  (in which ${x_{d+1}=0}$), the condition ${x=(x_{1},\ldots,x_{d},0)\neq0}$
  will imply the condition
  \[
    x=(x_{1},\ldots,x_{d})\neq0,
  \]
  and therefore
  \[
    \phi_{d+1}(\theta_{1},\ldots,\theta_{d},\theta_{d+1})\le
    \sum_{\substack{x\in\mathbb{Z}^{d}\\x\neq0}}a_{d+1}(x_{1},\ldots,x_{d},0)
    (\cos(x_{1}\theta_{1}+\cdots+x_{d}\theta_{d})-1).
  \]
  Hence, by virtue of~\eqref{eq:multidim2} we obtain
  \begin{align*}
    \phi_{d+1}(\theta_{1},\ldots,\theta_{d},\theta_{d+1})
    &
    \le\sum_{\substack{x\in\mathbb{Z}^{d}\\x\neq0}}a_{d+1}(x_{1},\ldots,x_{d},0)
    (\cos(x_{1}\theta_{1}+\cdots+x_{d}\theta_{d})-1)=
    \\
    & =\sum_{\substack{x\in\mathbb{Z}^{d}\\x\neq0}}a_{d}(x_{1},\ldots,x_{d})
    (\cos (x_{1}\theta_{1}+\cdots+x_{d}\theta_{d})-1)=
    \\
    & =\sum_{x\in\mathbb{Z}^{d}}a_{d}(x_{1},\ldots,x_{d})
    \cos(x_{1}\theta_{1}+\cdots+x_{d}\theta_{d})=
    \\
    & =\sum_{x\in\mathbb{Z}^{d}}a_{d}(x_{1},\ldots,x_{d})
    e^{i(x_{1}\theta_{1}+\cdots+x_{d}\theta_{d})}=
    \\
    & =\phi_{d} (\theta_{1},\ldots,\theta_{d}).
  \end{align*}
  Lemma~\ref{lem:2} is proven.
\end{proof}

As shown, for example, in~\cite{Yar2025}, for each $d=1, 2, \ldots$ and the
model described above, the probabilities of a particle transitioning over
time $t$ from point $x$ to point $y$ on $\mathbb{Z}^d$ can be represented as
$p_{d}(t,x,y)=p_{d}(t,0,y-x)=p_{d}(t,y-x)$, and as $h\downarrow 0$, they
satisfy the relations
\begin{align*}
  p_{d}(h, x)& = a_{d}(x)h + o(h), \quad x\not=0, \\
  p_{d}(h, 0)& = 1 + a_{d}(0)h + o(h),
\end{align*}
from which it follows that the probabilities $p_{d}(t,x)$ satisfy the
Kolmogorov backward equations
\begin{equation}\label{backeq}
  \frac{d p_{d}(t,x)}{dt} =
  \sum_{x'\in\mathbb{Z}^d}a_{d}(x')p(t,x'-x),\qquad p(0,x)=\delta(x),
\end{equation}
where $\delta(\cdot)$ is the Kronecker delta function on $\mathbb{Z}^d$.

In the sequel, we consider a random walk either with finite jump variance
(axiom~$5^d$)
\begin{equation}\label{P:1}
  \sum_{x \in \mathbb{Z}^d} |x|^2 a_{d}(x) < \infty,
\end{equation}
or with heavy tails, where the function $a_{d}(x)$ depends on
$\alpha\in(0,2)$---i.e., $a_{d}(x):=a_{d,\alpha}(x)$---and exhibits regular
behavior at infinity:
\begin{equation}\label{HT}
  a_{d,\alpha}(x) \sim \frac{H(x/|x|)}{|x|^{d+\alpha}},\qquad |x|\to~\infty,
\end{equation}
where $|\cdot|$ is the Euclidean norm on $\mathbb{R}^d$ and $H(\cdot)$ is a
positive, symmetric, continuous function on
$\mathbb{S}^{d-1}=\{x\in\mathbb{R}^d: |x|=1\}$. Under this assumption, the
jump variance becomes infinite~\cite{RytYar}

When condition~\eqref{HT} holds for $a_{d,\alpha}(x)$, we replace axiom $5^d$
with axiom $5^{d,\alpha}$:
\begin{equation}\label{Inf_Var}
  5^{d, \alpha}:\qquad\sigma^{2}=-
  \frac{1}{a_{d,\alpha}(0,0)}\sum_{x\in\mathbb{Z}^{d}}
  |x|^{2}a_{d,\alpha}(0,x)=\infty,
\end{equation}
i.e., the jumps have infinite variance, and will assume that all other axioms
hold for each fixed $\alpha$.

When using axiom~\eqref{Inf_Var}, we emphasize the dependence on $\alpha$ in
the notation for the transition intensities $a_{d,\alpha}(0,x)$ and write
them---as in Remark~\ref{R:1}---by omitting the zero argument:
\[
  a_{d,\alpha}(x):=a_{d,\alpha}(0,x),\qquad x\in\mathbb{Z}^{d}.
\]

\begin{remark}
  It is important to note that the proof of Lemmas~\ref{lem:1} and~\ref{lem:2}
  did not actually rely on axiom~$5^{d}$, and these lemmas remain valid under
  axiom $5^{d,\alpha}$ for any fixed ${\alpha\in (0,2)}$.
\end{remark}

\section{Green's function of a random walk}\label{S:0}
Applying the Fourier transform to equation~\eqref{backeq} (see,
e.g.,~\cite{Yar2025,Yar2024}), we obtain a representation for the Green's
function of the random walk:
\begin{equation}\label{Gxy}
  G_{\lambda}(x)=\frac{1}{(2\pi)^d}\int\limits_{[-\pi,\pi]^d}\frac{e^{i\langle
  x,\theta\rangle}}{\lambda-\phi_{d}(\theta)}d\theta,
\end{equation}
where $\langle\cdot,\cdot\rangle$ denotes the scalar product in
$\mathbb{R}^d$.

An alternative representation for the Green's function also holds (see,
e.g.,~\cite{Yar2025}); it expresses the function via the Laplace transform of
the transition probabilities $p_{d}(t,x)$ and takes the form:
\begin{equation}\label{Grin}
  G_{\lambda}(x):=
  \lim_{t\to \infty}\int_0^t e^{-\lambda u}p_{d}(u,x)\,du
  =\int_0^\infty e^{-\lambda u}p_{d}(u,x)\,du,\qquad \lambda \geq 0.
\end{equation}
This representation clarifies the probabilistic meaning of the function
$G_{0}(x)$: it represents the mean residence time of the random walk at point
$x\in \mathbb{Z}^{d}$ as $t\to \infty$. The asymptotic properties of the
Green's function $G_{\lambda}(x)$ for an arbitrary $\lambda$ were
investigated in~\cite{MolchanovYarovaya12}.

As is evident from~\eqref{Gxy} and~\eqref{Grin}, the Green's function
$G_{\lambda}(x)$ actually depends on the dimension $d$ as well; therefore, to
emphasize this dependence, we will henceforth use the notation
$G_{\lambda}(x) := G_{\lambda}(d, x)$:
\[
  G_\lambda(d,x)=
  \frac{1}{(2\pi)^d} \int\limits_{[-\pi,\pi]^d} \frac{e^{i\langle
  x,\theta\rangle}}
  {\lambda-\phi_{d}(\theta)}\,d\theta.
\]
According to~\eqref{eq:phi-positive}, $\phi_{d}(\theta) \leq 0$, and
consequently, at $x=0$, the Green's function
\begin{equation}\label{Glambda0}
  G_{\lambda}(d,0)=\frac{1}{(2\pi)^d} \int\limits_{[-\pi,\pi]^d} \frac{1}
  {\lambda-\phi_{d}(\theta)}\,d\theta
\end{equation}
decreases monotonically with respect to $\lambda$.

Recall the main properties of the Green function that play a key role in
analyzing the asymptotic properties of a random walk; for details,
see~\cite{Yar2024}.

Suppose condition~\eqref{P:1} holds. As $\lambda \to 0$, the function
$G_{\lambda}(d,0)$ tends to infinity in dimensions $d=1$ and $d=2$, whereas
for $d \geq 3$ it tends to a finite value $G_{0}(d,0)<\infty$.

Suppose condition~\eqref{HT} holds. As $\lambda \to 0$, the function
$G_{\lambda}(d,0)$ tends to infinity in dimension $d=1$ for any fixed
$\alpha\in [1,2)$, whereas it tends to a finite value $G_{0}(d,0)<\infty$ in
dimension $d=1$ for any fixed $\alpha\in (0,1)$ and in dimensions $d \geq 2$
for any fixed $\alpha\in (0,2)$.

Since $|G_{\lambda}(0)| < \frac{1}{\lambda}$, the function $G_{\lambda}(0)$
tends to zero as $\lambda \to \infty$ for any lattice dimension $d\in
\mathbb{N}$, under both condition~\eqref{P:1} and condition~\eqref{HT}.

Following, for instance,~\cite{RytYar}, we call a random walk
\emph{recurrent} if $G_0:=G_0(0,0)=\infty$, and \emph{transient} if
$G_0<\infty$. By virtue of~\eqref{Gxy}, establishing the recurrence of a
random walk requires upper and lower bounds on the Fourier transform
$\phi(\theta)$.

\section{Supercritical BRW with a single source}\label{S:5}
The question of the growth rate of particle numbers, $\lambda_0(d) > 0$,
arises not only in the analysis of simple random walks; the properties of
branching random walks also depend heavily on it. Therefore, in this section,
we recall the fundamental facts regarding the theory of branching random
walks that are necessary for our purposes.

We assume the presence of a single particle at the initial time $t=0$, which
begins a random walk on $\mathbb{Z}^d$ according to the rules described in
Section~\ref{S:RW}. At a specific point in $\mathbb{Z}^d$ (the branching
source), the initial particle may produce an arbitrary number of
offspring---including itself---or die. Without loss of generality, we assume
the branching source is located at the origin, $x=0$. We now turn to the
description of the branching mechanism. Branching at the source $x=0$ is
described by a Markov branching process defined by the infinitesimal
generating function $f(u)=\sum_{n=0}^{\infty}b_nu^n$ ($0\leqslant
u\leqslant1$), where $b_i$ represents the intensity of a particle dividing
into $i$ offspring (see, e.g.,~\cite{Yar2025} for details). We assume that
$b_n \ge 0$ for $n \neq 1$, $-\infty < b_1 < 0$, and
$\sum_{n=0}^{\infty}b_n=0$. Let us denote by
\begin{equation}
  \beta:=f'(1)=\sum_{n=0}^{\infty}nb_n=(-b_1)\left(-1+\sum_{n\not=1}^{\infty}
  n \frac{b_n}{-b_1}\right)
\end{equation}
the \emph{source intensity}, where the last sum characterizes the average
number of particles produced. Let $\beta^{(k)}= f^{(k)}(1)<\infty$ for $k=1$
or $k=2$.

To define the BRW process, we combine a random walk and a branching mechanism
in the manner described, for example, in~\cite{Yar2025}. Then (see,
e.g.,~\cite{Yar2025}), the random time until branching at the origin follows
an exponential distribution with parameter $-a_{d}(0)-b_1$. Subsequently, the
particle transforms into a random number of particles or moves to another
lattice site. The new particles evolve according to the same rules,
independently of one another and of the entire history of the process.

Of primary interest in the study of the described model is the behavior of
the local particle numbers $\mu_t(y)$ at an arbitrary point $y \in
\mathbb{Z}^d$, the total population size $\mu_t = \sum_{y \in \mathbb{Z}^d}
\mu_t(y)$, and their moments $m_n(t, x, y) = \mathsf{E}_x \mu_t^n(y)$ and
$m_n(t, x) = \mathsf{E}_x \mu_t^n$, where $\mathsf{E}_x$ denotes the
expectation given that, at the initial time, the system contained a single
particle located at point $x$. Using the derivation scheme
from~\cite{Yar2025}, we can proceed to linear differential equations for
$m_1(t,x)$ and $m_1(t,x,y)$ in the space $\ell^p(\mathbb{Z}^d)$, $1\leqslant
p\leqslant\infty$:
\begin{align*}
  \frac{d m_1(t,x)}{dt}   & = (\mathcal{H}_{\beta}\,m_1(t,
  \cdot))(x),  \quad m_1(0,x) = 1,                 \\
  \frac{d\,m_1(t,x,y)}{dt}& = (\mathcal{H}_{\beta}\,m_1(t, \cdot,
  y))(x),  \quad m_1(0, x, y) = \delta_x(y),
\end{align*}
where the operator $\mathcal{H}_{\beta}$ is defined by the equality
\[
  \mathcal{H}_{\beta}:=\mathcal{H}_{\beta,d}=\mathcal{A}_{d}+\beta\mathcal{V}_{0},
\]
in which $(\mathcal{V}_{0}u)(x)=u(0)\delta_{0}(x)$, and the operator
$\mathcal{A}_{d}: \ell^p(\mathbb{Z}^d)\rightarrow\ell^p(\mathbb{Z}^d)$,
$1\leqslant p\leqslant\infty$, is defined by the equality
\[
  (\mathcal{A}_{d}f)(x)=\sum_{x'}a_{d}(x,x')f(x').
\]
In the space $\ell^2(\mathbb{Z}^d)$, the operator $\mathcal{A}_{d}$ is
self-adjoint.

The behavior of the branching random walk depends significantly on the
recurrence property. Let $\beta_c$ denote the minimum source intensity such
that, for $\beta > \beta_c$, the spectrum of the operator
$\mathcal{H}_{\beta}$ contains an eigenvalue $\lambda_{0}(d) > 0$ (see,
e.g.,~\cite{Yar2025}); this eigenvalue is a solution to the equation
\begin{equation}\label{E:Gd}
  G_{\lambda}(d,0)=\beta^{-1}
\end{equation}
where $\beta > 0$. We call a BRW subcritical if $\beta < \beta_c$, critical
if $\beta = \beta_c$, and supercritical if $\beta > \beta_c$. For a BRW with
a single branching source and $\beta > \beta_c$, limit theorems have been
proven (see, e.g.,~\cite{Yar2024,Yar2025}) establishing the exponential
growth of particle numbers with parameter $\lambda_{0}(d) > 0$, both at
individual lattice sites and across the entire lattice. It is shown
in~\cite{Yar2024} that for the single-source case, the following relations
hold: if $G_0 = \infty$, then $\beta_c = 0$, and if $G_0 < \infty$, then
$\beta_c = G_0^{-1} = 1/G_0$.

For the BRW models under consideration, the transition from finite to
infinite jump variance gives rise to new effects. Thus, although the
fundamental differential and integral equations for the generating functions
and the equations for the particle count moments remain unchanged when
condition~\eqref{HT} is satisfied, the properties and asymptotic behavior of
the functions $\phi(\theta)$ and $p(t,x,y)$---and, consequently, of the
functions $G_{\lambda}(x,y)$ and $m_1(t,x,y)$---undergo qualitative changes.
For instance, in the case of infinite jump variance, the condition
$G_0<\infty$ can hold in dimension $d=1$ for $\alpha\in(0,1)$ and in
dimensions $d\ge2$ for $\alpha\in(0,2)$, whereas in the case of finite jump
variance, $G_0<\infty$ holds only in dimensions $d\ge3$. This implies that
for the lattices $\mathbb{Z}^{1}$ and $\mathbb{Z}^2$, the value of $\beta_c$
can be strictly positive; that is, for $d=1$ or $d=2$, a heavy-tailed
branching random walk (unlike the finite-variance case) can be subcritical
even when the branching process at the source is supercritical.

\section{Main Results}\label{S:4}

In this section, we prove the main assertions of the paper:
Theorem~\ref{T:6}, concerning the monotonic decrease with respect to the
dimension $d$ of the eigenvalues of the evolution operator for a
supercritical branching random walk, and Theorem~\ref{T:7}, regarding the
asymptotic expansion of the principal eigenvalue for a supercritical
branching random walk in the regime of high particle-multiplication source
intensity.

We begin by analyzing the growth of the particle numbers $\lambda_0(d) > 0$
as a function of $d$, which is equivalent to analyzing the growth of the
solution to the equation $G_{\lambda}(d,0) = \beta^{-1}$.

\begin{lemma}\label{L:Gd} For each $\lambda > 0$, the Green's
  function $G_{\lambda}(d,0)$ of an increasing nested
  lattice-consistent set of random walks is monotonically decreasing in $d$.
\end{lemma}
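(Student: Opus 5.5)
The plan is to use the Fourier representation \eqref{Glambda0} and Lemma~\ref{lem:2} directly. Write the $(d+1)$-dimensional integral over $[-\pi,\pi]^{d+1}$ as an iterated integral: first over $\theta'=(\theta_1,\ldots,\theta_d)\in[-\pi,\pi]^d$, then over $\theta_{d+1}\in[-\pi,\pi]$. By Lemma~\ref{lem:2}, for every $\theta'$ and $\theta_{d+1}$,
\[
  \phi_{d+1}(\theta',\theta_{d+1})\le\phi_d(\theta')\le 0.
\]
Since $\lambda>0$, this gives
\[
  \lambda-\phi_{d+1}(\theta',\theta_{d+1})\ge\lambda-\phi_d(\theta')\ge\lambda>0,
\]
and hence
\[
  0<\frac{1}{\lambda-\phi_{d+1}(\theta',\theta_{d+1})}\le\frac{1}{\lambda-\phi_d(\theta')}\le\frac1\lambda .
\]
All integrands are bounded, measurable and positive, so Fubini applies without difficulty.

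Integrating the last inequality gives
\[
  G_\lambda(d+1,0)=\frac{1}{(2\pi)^{d+1}}\int_{-\pi}^{\pi}\int_{[-\pi,\pi]^d}\frac{d\theta'\,d\theta_{d+1}}{\lambda-\phi_{d+1}(\theta',\theta_{d+1})}
  \le\frac{1}{2\pi}\int_{-\pi}^{\pi}\frac{1}{(2\pi)^d}\int_{[-\pi,\pi]^d}\frac{d\theta'}{\lambda-\phi_d(\theta')}\,d\theta_{d+1}=G_\lambda(d,0),
\]
because the inner integral no longer depends on $\theta_{d+1}$ and the normalization $(2\pi)^{-1}\int_{-\pi}^{\pi}d\theta_{d+1}=1$ cancels exactly. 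This yields monotonicity (non-strict) in $d$.

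The main obstacle is strictness, if ``monotonically decreasing'' is meant strictly. Strictness requires $\phi_{d+1}<\phi_d$ on a set of positive measure. Returning to the proof of Lemma~\ref{lem:2}, the discarded terms are $a_{d+1}(x)(\cos\langle x,\theta\rangle-1)$ with $x_{d+1}\neq0$. Irreducibility (axiom $4^{(d+1)}$) forces $a_{d+1}(x)>0$ for some $x$ with $x_{d+1}\ne0$; otherwise the last coordinate could never change. For that $x$, the factor $\cos\langle x,\theta\rangle-1$ is strictly negative off a null set (a finite union of hyperplanes in the torus). Hence $\phi_{d+1}<\phi_d$ almost everywhere, and the integral inequality is strict. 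The same argument works under axiom $5^{d,\alpha}$, since only axioms $1$–$4$, $6$ and \eqref{eq:multidim2} are used.
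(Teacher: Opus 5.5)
Your proof is correct and follows the same route as the paper. Both use the pointwise bound $\lambda-\phi_{d+1}\ge\lambda-\phi_d$ from Lemma~\ref{lem:2}, then write the $(d+1)$-dimensional integral as an iterated one whose inner integral is $(2\pi)^d G_\lambda(d,0)$.

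Your irreducibility argument for strictness is a genuine addition. The paper switches from $\le$ to $<$ without justification, and the strict inequality is exactly what Theorem~\ref{T:6} needs to get $\lambda_0(d+1,\beta)<\lambda_0(d,\beta)$.
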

\begin{proof}
  For an arbitrary $d$, we estimate the denominator of the integrand
  in~\eqref{Glambda0} using Lemma~\ref{lem:2} as follows:
  \[
    \lambda-\phi_{d+1}(\theta_{1},\ldots,\theta_{d},\theta_{d+1})\ge
    \lambda-\phi_{d}(\theta_{1},\ldots,\theta_{d}),\qquad\forall~\theta_{i}\in[-\pi,\pi].
  \]
  From this, we obtain
  \begin{align*}
    G_{\lambda}(d+1,0)& =\frac{1}{(2\pi)^{d+1}}
    \int\limits_{[-\pi,\pi]^{d+1}} \frac{1}
    {\lambda-\phi_{d+1}(\theta_{1},\ldots,\theta_{d},\theta_{d+1})}\,d\theta_{1}\cdots
    d\theta_{d}\,d\theta_{d+1}\le
    \\
    & \le\frac{1}{(2\pi)^{d+1}} \int\limits_{[-\pi,\pi]^{d+1}} \frac{1}
    {\lambda-\phi_{d}(\theta_{1},\ldots,\theta_{d})}\,d\theta_{1}\cdots
    d\theta_{d}\,d\theta_{d+1}.
  \end{align*}
  Let us express the right-hand side of the resulting inequality as
  an iterated
  integral:
  \[
    G_{\lambda}(d+1,0)\le\frac{1}{(2\pi)^{d+1}}\int_{-\pi}^{\pi}
    \left(\int\limits_{[-\pi,\pi]^{d}} \frac{1}
      {\lambda-\phi_{d}(\theta_{1},\ldots,\theta_{d})}d\theta_{1}\ldots
    d\theta_{d} \right)d\theta_{d+1}.
  \]
  Here, the expression in parentheses is simply
  $(2\pi)^{d}\,G_{\lambda}(d,0)$;
  therefore,
  \[
  G_{\lambda}(d+1,0)<\frac{1}{(2\pi)^{d+1}}\int_{-\pi}^{\pi}
  (2\pi)^{d}G_{\lambda}(d,0)\,d\theta_{d+1}.
  \]
  Since the integrand does not depend on $\theta_{d+1}$, we obtain
  \[
  G_{\lambda}(d+1,0)<\frac{G_{\lambda}(d,0)}{2\pi}\int_{-\pi}^{\pi}\,d\theta_{d+1}=G_{\lambda}(d,0).
  \]

  Lemma~\ref{L:Gd} is proved.
\end{proof}

\begin{figure}[htbp!]
  \centering\includegraphics[width=0.8\textwidth,clip]{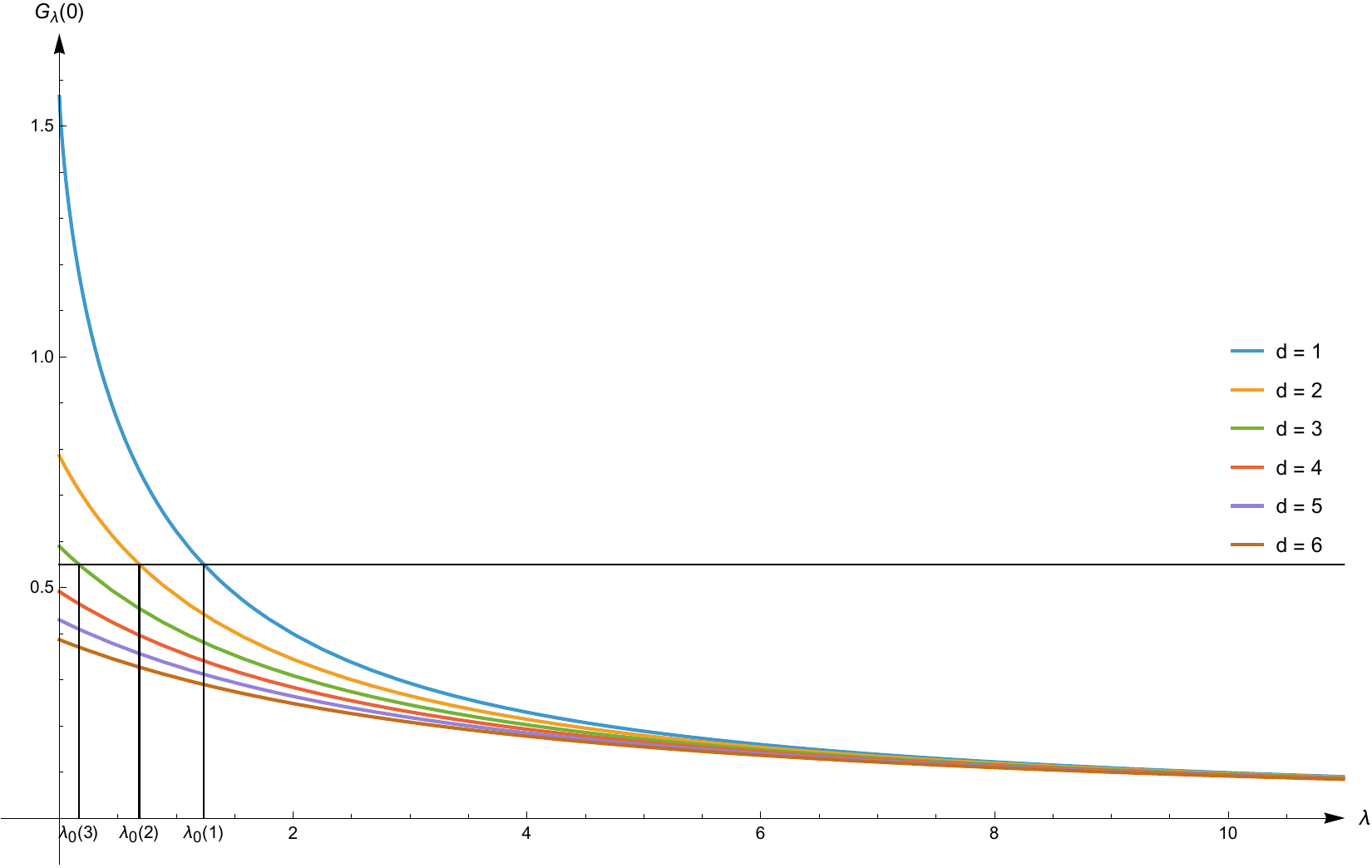}
  \caption{Behavior of the solution $\lambda_{0}(d)$ to
    equation~\eqref{E:Gd} as a function of
    the dimension of $\mathbb{Z}^{d}$ under axiom $5^{d, \alpha}$ for
  each $\alpha\in (0,1)$.}\label{F:3}
\end{figure}
\begin{figure}[htbp!]
  \centering\includegraphics[width=0.8\textwidth,clip]{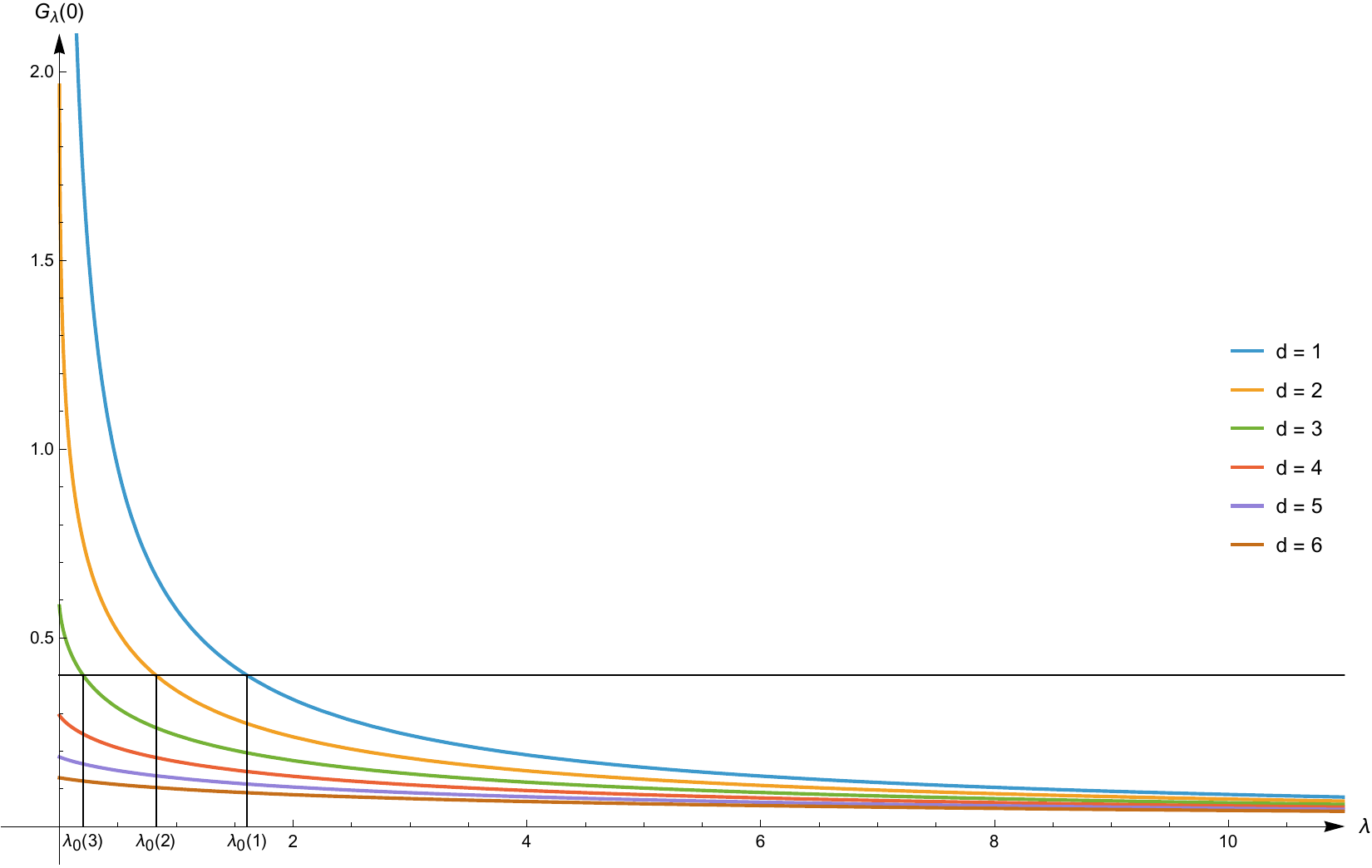}
  \caption{Behavior of the solution $\lambda_{0}(d)$ to
    equation~\eqref{E:Gd} as a function of
    the dimension of $\mathbb{Z}^{d}$ under axiom $5^{d, \alpha}$ for
  each $\alpha\in [1,2)$.}\label{F:4}
\end{figure}

We are now ready to prove the first of the main assertions of this section.
\begin{theorem}\label{T:6}
  Suppose that one of the conditions~\eqref{P:1} or~\eqref{HT} holds. Then
  there exists an integer $d(\beta)$ such that for each
  $d=1,2,\ldots,d(\beta)$, equation~\eqref{E:Gd} has a unique
  positive solution
  $\lambda_{0}:=\lambda_{0}(d, \beta)\in (0,\beta)$. Moreover, the function
  $\lambda_{0}(d, \beta)$ is strictly monotonically decreasing in $d$, i.e.,
  \[
    0<\lambda_{0}(n,\beta)<\ldots
    \lambda_{0}(d+1,\beta)<\lambda_{0}(d,\beta)
    <\ldots<\lambda_{0}(1,\beta) <\beta.
  \]
\end{theorem}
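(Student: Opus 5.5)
The proof studies, for fixed $d$ and $\beta$, the function $g_d(\lambda):=G_{\lambda}(d,0)$ on $(0,\infty)$, and reads off $\lambda_0(d,\beta)$ as the unique root of $g_d(\lambda)=\beta^{-1}$.

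\emph{Step 1: properties of $g_d$.} By \eqref{Glambda0}, the integrand $(\lambda-\phi_d(\theta))^{-1}$ is positive and strictly decreasing in $\lambda$, because $\phi_d\le 0$. Dominated convergence then shows that $g_d$ is continuous and strictly decreasing on $(0,\infty)$. We also have $g_d(\lambda)\to 0$ as $\lambda\to\infty$, and $g_d(\lambda)\uparrow G_0(d,0)\in(0,\infty]$ as $\lambda\downarrow 0$. Moreover $g_d(\lambda)<1/\lambda$, since $\phi_d(\theta)<0$ for $\theta\ne 0$ by \eqref{eq:phi-positive}. In particular $g_d(\beta)<\beta^{-1}$.

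\emph{Step 2: existence and uniqueness for small $d$.} A root of $g_d(\lambda)=\beta^{-1}$ exists if and only if $\beta^{-1}<G_0(d,0)$, that is, $\beta>\beta_c(d)=1/G_0(d,0)$. If it exists, it is unique by strict monotonicity, and it lies in $(0,\beta)$ because $g_d(\beta)<\beta^{-1}$.

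By Lemma~\ref{L:Gd}, $g_{d+1}(\lambda)<g_d(\lambda)$ for every $\lambda>0$. Letting $\lambda\downarrow0$ gives $G_0(d+1,0)\le G_0(d,0)$, so $\beta_c(d)$ is nondecreasing in $d$. Hence the set of $d$ for which $\beta>\beta_c(d)$ is an initial segment $\{1,\dots,d(\beta)\}$, possibly all of $\mathbb{N}$.

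Nonemptiness follows from the recurrence facts in Section~\ref{S:0}. Under \eqref{P:1}, and under \eqref{HT} with $\alpha\in[1,2)$, we have $G_0(1,0)=\infty$, so $d=1$ qualifies for every $\beta>0$. In the remaining case ($\alpha\in(0,1)$) one must assume $\beta>\beta_c(1)$; alternatively, $d(\beta)$ is interpreted as possibly empty. I would flag this implicit assumption rather than try to prove it.

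\emph{Step 3: strict monotonicity in $d$.} Fix $d<d(\beta)$ and write $\lambda_d=\lambda_0(d,\beta)$. By Lemma~\ref{L:Gd},
\[
g_{d+1}(\lambda_d)<g_d(\lambda_d)=\beta^{-1}=g_{d+1}(\lambda_{d+1}).
\]
Since $g_{d+1}$ is strictly decreasing, this forces $\lambda_{d+1}<\lambda_d$. Chaining these inequalities and using $\lambda_1<\beta$ from Step 2 gives the stated chain.

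\emph{Main obstacle.} The delicate point is the strictness in Lemma~\ref{L:Gd}. The written proof only produces $\le$ from Lemma~\ref{lem:2} before asserting $<$. To repair this, I would note that by irreducibility (axiom $4^{(d+1)}$) some $a_{d+1}(x)$ with $x_{d+1}\ne0$ is positive. The discarded term $a_{d+1}(x)(\cos\langle x,\theta\rangle-1)$ is then strictly negative on a set of $\theta$ of positive measure. This makes the inequality $\phi_{d+1}\le\phi_d$ strict on a set of positive measure, and hence the integral inequality strict.

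The remaining steps are routine: the limits of $g_d$ at $0$ and $\infty$, and the monotone convergence used to define $G_0$.
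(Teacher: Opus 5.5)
Your proposal is correct and takes essentially the same route as the paper. It combines three ingredients: the strict decrease of $G_\lambda(d,0)$ in $\lambda$ together with $G_\lambda(d,0)<1/\lambda$; Lemma~\ref{L:Gd}, used to propagate existence to smaller $d$; and a comparison of $G_\lambda(d+1,0)$ with $G_\lambda(d,0)$ at $\lambda=\lambda_0(d,\beta)$, which yields $\lambda_0(d+1,\beta)<\lambda_0(d,\beta)$.

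Your additional points correctly fill gaps that the paper's proof passes over. You derive the strict inequality in Lemma~\ref{L:Gd} from irreducibility: some $a_{d+1}(x)>0$ with $x_{d+1}\neq0$, so the sum of discarded terms is strictly negative for almost every $\theta$. You also observe that $d(\beta)$ may be infinite, and that $d=1$ fails for small $\beta$ when $\alpha\in(0,1)$.
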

\begin{proof}
  By virtue of~\eqref{Glambda0}, for each $d=1,2,\ldots$, the
  function $G_{\lambda}(d,0)$ is monotonically decreasing and tends
  to zero as $\lambda\to\infty$. Therefore, for a given $\beta>0$,
  there exists an integer $d(\beta)$ such that for
  $d>d(\beta)$, equation~\eqref{E:Gd} no longer has a positive solution
  $\lambda_{0}(d)$, whereas it does have a solution at $d=d(\beta)$. Then, by
  Lemma~\ref{L:Gd}, equation~\eqref{E:Gd} has a solution
  $\lambda_{0}(d)>0$ for
  each $d=1,2,\ldots,d(\beta)$; that is, for these values of $d$, the
  inequality $\beta>G_{0}^{-1}(0,d)$ holds~\cite{Yar2025}.
  Moreover, due to the
  monotonicity of $G_{\lambda}(d,0)$ with respect to $\lambda$, the solution
  $\lambda_{0}(d)$ is unique.

  By virtue of~\eqref{eq:phi-positive}, the function $\phi_{d}(\theta)$ is
  strictly negative for almost all $\theta\in[\pi,\pi]^{d}$. Then it follows
  from~\eqref{Glambda0} that $G_{\lambda}(d,0)<\lambda^{-1}$, and therefore,
  \[
    \lambda_{0}(1,\beta)< G_{\lambda_{0}}(d,0)^{-1}=\beta.
  \]

  To prove the remaining inequalities, it suffices to show that the inequality
  \begin{equation}\label{E:inlambda}
    \lambda_{0}(d+1,\beta)<\lambda_{0}(d,\beta)
  \end{equation}
  holds for each $d=1,2,\ldots,n-1$.

  Finally, we note that, by Lemma~\ref{L:Gd}, for any value $\lambda>0$, the
  function $G_{\lambda}(d,0)$ is monotonically decreasing in $d$. This,
  combined with the obvious fact that $G_{\lambda}(d,0)$ monotonically
  decreases with respect to $\lambda$, implies relation~\eqref{E:inlambda},
  thereby completing the proof of Theorem~\ref{T:6}.
\end{proof}

\begin{corollary}\label{C:1}
  Suppose condition~\eqref{P:1} holds; then, for dimensions $d=1$ and $d=2$,
  equation~\eqref{E:Gd} has a unique positive solution for any
  value $\beta>0$.
  Moreover,
  \[
    \lim_{\beta \to 0}(\lambda_{0}(1,\beta)-\lambda_{0}(2,\beta))=0.
  \]
\end{corollary}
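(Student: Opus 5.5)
The plan is to treat the two assertions separately: existence and uniqueness in $d=1,2$, and then the vanishing of the difference as $\beta\to0$.

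\emph{Existence and uniqueness for $d=1,2$.} Under condition~\eqref{P:1}, Section~\ref{S:0} recalls that $G_{\lambda}(d,0)\to\infty$ as $\lambda\to0$ for $d=1$ and $d=2$. I would also record three further facts.
\begin{itemize}
\item[(i)] $G_\lambda(d,0)<\lambda^{-1}$, so $G_\lambda(d,0)\to0$ as $\lambda\to\infty$.
\item[(ii)] By~\eqref{Glambda0}, $G_\lambda(d,0)$ is continuous in $\lambda>0$, by dominated convergence on $[\lambda_1,\infty)$.
\item[(iii)] $G_\lambda(d,0)$ is strictly decreasing in $\lambda$, since the integrand decreases strictly in $\lambda$ for every $\theta$.
\end{itemize}
By the intermediate value theorem, for every $\beta>0$ the equation $G_\lambda(d,0)=\beta^{-1}$ has a solution $\lambda_0(d,\beta)>0$. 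It is unique by strict monotonicity. Equivalently, $\beta_c=0$ because $G_0=\infty$, as noted in Section~\ref{S:5}.

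\emph{The limit.} Both eigenvalues tend to $0$ as $\beta\to0$. Indeed, if $\lambda_0(d,\beta)\ge\varepsilon$ along a sequence $\beta\to0$, then $\beta^{-1}=G_{\lambda_0}(d,0)\le G_\varepsilon(d,0)<\infty$, which is a contradiction. Theorem~\ref{T:6} gives $0<\lambda_0(2,\beta)<\lambda_0(1,\beta)$, so
\[
0<\lambda_0(1,\beta)-\lambda_0(2,\beta)<\lambda_0(1,\beta)\to0 .
\]
This proves the limit without any quantitative asymptotics. The sandwich step is the only place where Lemma~\ref{L:Gd} (through Theorem~\ref{T:6}) is needed.

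\emph{Main obstacle.} The delicate point is the assumption behind Theorem~\ref{T:6}, namely that the walks form an increasing nested lattice-consistent set. I would state this assumption explicitly for the corollary. Alternatively, I would avoid it entirely by noting that each $\lambda_0(d,\beta)\to0$ separately, for $d=1,2$. Then the triangle inequality gives the limit with no ordering required.

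If sharper information were wanted, I would use the standard expansions $\phi_d(\theta)\sim-c|\theta|^2$ near $0$. These give $G_\lambda(1,0)\asymp\lambda^{-1/2}$ and $G_\lambda(2,0)\asymp\log(1/\lambda)$. Hence $\lambda_0(1,\beta)\asymp\beta^2$ and $\lambda_0(2,\beta)\asymp e^{-c/\beta}$, which confirms the limit and also yields rates. This refinement is not necessary for the statement.
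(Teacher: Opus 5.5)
Your proposal is correct and follows essentially the paper's argument: existence comes from the divergence of $G_\lambda(d,0)$ as $\lambda\to0$, and the limit comes from the squeeze $0<\lambda_0(2,\beta)<\lambda_0(1,\beta)$ supplied by Theorem~\ref{T:6}. The one difference is in showing $\lambda_0(1,\beta)\to0$: the paper reads this off the bound $\lambda_0(1,\beta)<\beta$, which follows from your fact (i), whereas you use a contradiction argument. Your remark that this bound, holding for each $d$, makes the ordering unnecessary is also correct; so is the fact that it removes the nested-consistency hypothesis implicit in Theorem~\ref{T:6}.
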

\begin{proof} The existence of solutions for all $\beta > 0$
  follows from the fact that, in
  dimensions $d = 1$ and $d = 2$, the function $G_{\lambda}(d,0)$ increases
  without bound as $\lambda\to0$; see, for example,~\cite{Yar2025}.
  Furthermore, by Theorem~\ref{T:6}, the following inequalities hold for all
  $\beta > 0$:
  \[
    0 < \lambda_0(2, \beta) < \lambda_0(1, \beta) < \beta.
  \]
  Letting $\beta$ tend to zero, we obtain the assertion of
  Corollary~\ref{C:1}.
\end{proof}

\begin{corollary}\label{C:2}
  Suppose condition~\eqref{HT} holds. Then, for dimension $d=1$ and any fixed
  $\alpha\in [0,1)$, equation~\eqref{E:Gd} has a unique positive solution for
  any $\beta>0$; in the other cases, a solution exists if $G_{\lambda}(d,0)>
  \beta^{-1}$.
\end{corollary}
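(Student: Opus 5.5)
The plan is to follow the structure of Corollary~\ref{C:1}, but now using the heavy-tailed facts about the Green's function recalled in Section~\ref{S:0}. The only tools needed are three properties of $\lambda\mapsto G_{\lambda}(d,0)$, all visible from~\eqref{Glambda0} together with~\eqref{eq:phi-positive}:
\begin{itemize}
\item[(a)] it is continuous and strictly decreasing on $(0,\infty)$, since the integrand is strictly decreasing in $\lambda$ for almost every $\theta$ and dominated convergence applies on compact subintervals;
\item[(b)] $G_{\lambda}(d,0)<\lambda^{-1}$, so it tends to $0$ as $\lambda\to\infty$;
\item[(c)] as $\lambda\downarrow0$ it converges monotonically to $G_0(d,0)\in(0,\infty]$, by monotone convergence.
\end{itemize}
Under these properties, equation~\eqref{E:Gd} has a positive solution if and only if $\beta^{-1}$ lies in the range $(0,G_0(d,0))$. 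When a solution exists it is unique by strict monotonicity, and it lies in $(0,\beta)$ by (b), exactly as in the proof of Theorem~\ref{T:6}.

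\emph{First case: $d=1$ with $\alpha$ such that $G_0(1,0)=\infty$.} By the properties listed in Section~\ref{S:0} under~\eqref{HT}, the range $(0,G_0(1,0))$ is all of $(0,\infty)$. Hence the equation $G_\lambda(1,0)=\beta^{-1}$ has exactly one root for every $\beta>0$, by the intermediate value theorem combined with (a)--(c). The argument is word for word that of Corollary~\ref{C:1}, with the recurrence statement for~\eqref{P:1} replaced by the corresponding statement for~\eqref{HT}.

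\emph{Remaining cases: $d=1$ with $G_0<\infty$, and all $d\ge2$.} Here Section~\ref{S:0} gives $G_0(d,0)<\infty$. Reading the condition in the statement as $\sup_{\lambda>0}G_\lambda(d,0)=G_0(d,0)>\beta^{-1}$, that is $\beta>\beta_c=1/G_0(d,0)$, the same intermediate value argument gives a unique solution. Conversely, if $\beta\le 1/G_0(d,0)$, then by strict monotonicity $G_\lambda(d,0)<G_0(d,0)\le\beta^{-1}$ for every $\lambda>0$, so no positive solution exists. This matches the criterion $\beta>\beta_c$ recalled from~\cite{Yar2024}.

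\emph{Main obstacle.} The analysis itself is routine; the difficulty is bookkeeping about which values of $\alpha$ fall into which case. The text of Section~\ref{S:0} says that $G_\lambda(1,0)\to\infty$ for $\alpha\in[1,2)$ and stays finite for $\alpha\in(0,1)$, whereas the corollary as stated refers to $\alpha\in[0,1)$. I would therefore state explicitly that the first case is the recurrent one, $G_0(1,0)=\infty$, and then invoke whichever $\alpha$-range Section~\ref{S:0} assigns to recurrence in $d=1$. With that choice, the statement is correct under the reading $G_0(d,0)>\beta^{-1}$ for the ``other cases''. I would also verify continuity of $\lambda\mapsto G_\lambda(d,0)$ at small $\lambda$ in the transient case. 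This follows from monotone convergence, since $1/(\lambda-\phi_d)\uparrow 1/(-\phi_d)$, which is integrable precisely when $G_0(d,0)<\infty$.
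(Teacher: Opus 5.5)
Your proposal is correct and follows the same route as the paper's proof. That proof likewise rests on $G_\lambda(1,0)\to\infty$ as $\lambda\to0$ in the recurrent case and on $G_0(d,0)<\infty$ otherwise, and it explicitly takes $\alpha\in[1,2)$ for the first case, so keying that case to recurrence is the right choice. You should state the correction firmly rather than hedge: your own argument for the remaining cases shows that for $d=1$, $\alpha\in(0,1)$ and $\beta\le 1/G_0(1,0)$ there is no positive solution, and $\alpha=0$ lies outside the model. Hence the range $[0,1)$ in the statement is a misprint for $[1,2)$, not merely a matter of reading.
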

\begin{proof} The existence of solutions for all $\beta > 0$
  follows from the fact that, in dimensions $d = 1$ with $\alpha
  \in [1,2)$, the Green function at zero $G_\lambda(d,0)$ diverges
  as $\lambda \to 0$ (see, e.g.,~\cite{Yar2024}, and
  Fig.~\ref{F:4}). In the other
  cases, $G_\lambda(d,0)<\infty$ (see Figs.~\ref{F:3}, and~\ref{F:4}).
\end{proof}

\begin{theorem}\label{T:7}
  If condition~\eqref{P:1} holds, then for each $d\in\mathbb{N}$,
  the following
  representation is valid:
  \[
    \lambda_{0}(d,\beta)=\beta+a_d(0) + O(\beta^{-1}) \quad \text{as}
    \quad \beta\to \infty.
  \]

  If condition~\eqref{HT} holds, then for each $d\in\mathbb{N}$ and each fixed
  $\alpha \in (0,2)$, the following representation is valid:
  \[
    \lambda_{0}(d,\beta)=\beta+a_{d,\alpha}(0) + O(\beta^{-1}) \quad \text{as}
    \quad \beta\to \infty.
  \]
\end{theorem}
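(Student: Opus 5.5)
Since $\lambda_0=\lambda_0(d,\beta)$ is the root of \eqref{E:Gd}, I will expand $G_\lambda(d,0)$ in powers of $\lambda^{-1}$ for large $\lambda$ and then invert the equation $G_\lambda(d,0)=\beta^{-1}$. The key observation is that $\phi_d$ is bounded: $|\phi_d(\theta)|\le \sum_{x\ne0}a_d(x)(1-\cos\langle x,\theta\rangle)\le 2|a_d(0)|$ by axioms $1^{(d)}$, $2^{(d)}$. This bound uses neither \eqref{P:1} nor \eqref{HT}, so both cases are handled by one argument; only the value of $a_d(0)$ (resp. $a_{d,\alpha}(0)$) appears. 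For $\lambda>2|a_d(0)|$ I expand the integrand in \eqref{Glambda0} as a geometric series,
\[
\frac{1}{\lambda-\phi_d(\theta)}=\frac1\lambda+\frac{\phi_d(\theta)}{\lambda^2}+\frac{\phi_d(\theta)^2}{\lambda^2(\lambda-\phi_d(\theta))},
\]
where the remainder is bounded by $4a_d(0)^2\lambda^{-3}$ uniformly in $\theta$.

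Next I need the mean of $\phi_d$. Averaging over $[-\pi,\pi]^d$ gives $(2\pi)^{-d}\int\phi_d(\theta)\,d\theta=a_d(0)$, because every term $e^{i\langle x,\theta\rangle}$ with $x\neq0$ integrates to zero. The interchange of sum and integral is justified since $\sum_x|a_d(x)|=2|a_d(0)|<\infty$. Therefore
\[
G_\lambda(d,0)=\frac1\lambda+\frac{a_d(0)}{\lambda^2}+R(\lambda),\qquad |R(\lambda)|\le \frac{C}{\lambda^3},
\]
with $C=4a_d(0)^2$.

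To invert, I first note that $\lambda_0\to\infty$ as $\beta\to\infty$. Indeed, $G_\lambda(d,0)$ is finite and decreasing for $\lambda>0$, and $G_{\lambda_0}(d,0)=\beta^{-1}\to0$ forces $\lambda_0\to\infty$. In particular $\lambda_0$ exists and is unique for large $\beta$, which Theorem~\ref{T:6} gives, or which follows directly since $\beta>1/G_0$ eventually. A cruder bound also holds: $G_\lambda\ge (\lambda+2|a_d(0)|)^{-1}$, so $\lambda_0\ge\beta-2|a_d(0)|$, and Theorem~\ref{T:6} gives $\lambda_0<\beta$. Hence $\lambda_0=\beta+O(1)$.

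Substituting into the expansion and multiplying by $\lambda_0\beta$ gives
\[
\lambda_0=\beta\Bigl(1+\frac{a_d(0)}{\lambda_0}+O(\lambda_0^{-2})\Bigr)=\beta+a_d(0)\frac{\beta}{\lambda_0}+O(\beta^{-1}).
\]
Since $\beta/\lambda_0=1+O(\beta^{-1})$ by the a priori bound $\lambda_0=\beta+O(1)$, this yields $\lambda_0=\beta+a_d(0)+O(\beta^{-1})$. The heavy-tailed case is identical with $a_{d,\alpha}$ in place of $a_d$.

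The main obstacle I expect is the bookkeeping in this inversion: showing that the error really is $O(\beta^{-1})$ and not merely $o(1)$. This is exactly why I establish the a priori estimate $\lambda_0=\beta+O(1)$ first, before plugging in the expansion.
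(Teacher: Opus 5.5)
Your proof is correct. It takes a different route from the paper in how the equation $G_{\lambda}(d,0)=\beta^{-1}$ is inverted; the core computation is the same.

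\textbf{What the paper does.} It rescales $\beta=\tau^{-1}$, $\lambda=\tau^{-1}\gamma$ and rewrites the equation as $\tilde F(\gamma,\tau)=(2\pi)^{-d}\int(\gamma-\tau\phi_d(\theta))^{-1}\,d\theta=1$. It then applies the implicit function theorem at $(1,0)$, where $\tilde F'_\gamma(1,0)=-1$, and reads off $\gamma'(0)=\tilde F'_\tau(1,0)=(2\pi)^{-d}\int\phi_d\,d\theta=a_d(0)$.

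\textbf{What you do.} You expand $G_\lambda(d,0)$ to second order in $\lambda^{-1}$ with an explicit remainder, and invert by hand using the a priori bounds $\beta-2|a_d(0)|\le\lambda_0\le\beta$. Both arguments rest on the same identity: the average of $\phi_d$ over the torus equals $a_d(0)$.

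\textbf{What each buys.} The paper's argument is shorter. It also gives smoothness (in fact analyticity) of $\gamma(\tau)$, so a full expansion of $\lambda_0(d,\beta)$ in powers of $\beta^{-1}$ follows by repeated implicit differentiation. However, it silently uses two facts that you handle explicitly:
\begin{enumerate}
\item[(i)] Smoothness of $\tilde F$ near $(1,0)$. This rests on the bound $|\phi_d|\le 2|a_d(0)|$, which the paper never states.
\item[(ii)] That the local implicit-function branch is the actual root $\tau\lambda_0(d,\tau^{-1})$. This requires uniqueness of the positive root, which the paper does not address.
\end{enumerate}
Your version avoids both issues. It is elementary and quantitative, with explicit constant $4a_d(0)^2$ and an explicit range of validity, and the two-sided bound on $\lambda_0$ makes the branch identification unnecessary.

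\textbf{Two minor points.}
\begin{enumerate}
\item[(a)] The restriction $\lambda>2|a_d(0)|$ is not needed. Since $\phi_d\le 0$, you have $\lambda-\phi_d\ge\lambda$ for every $\lambda>0$, so your remainder bound holds for all $\lambda>0$.
\item[(b)] You do not need Theorem~\ref{T:6}. The upper bound $\lambda_0\le\beta$ follows directly from $G_\lambda(d,0)\le\lambda^{-1}$, and existence for large $\beta$ follows from your intermediate-value argument. This keeps the proof independent of the rest of the paper.
\end{enumerate}
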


\begin{proof}
  The constructions presented below hold true whether condition~\eqref{P:1} or
  condition~\eqref{HT} is satisfied. Accordingly, when condition~\eqref{HT}
  holds, we will omit the index $\alpha$. In both cases, by virtue
  of~\eqref{Glambda0}, equation~\eqref{E:Gd} takes the form
  \begin{equation}\label{eq:lambda-beta}
    \frac{1}{(2\pi)^d}\int\limits_{[-\pi,\pi]^d}
    \frac{1}{\lambda-\phi_{d}(\theta)}\,d\theta=\frac{1}{\beta}.
  \end{equation}
  Our goal is to find a solution $\lambda$ to this equation as a function of
  the variable $\beta$; that is, to express $\lambda$ in the form $\lambda =
  \lambda(d, \beta)$. Unfortunately, determining the asymptotic behavior of
  $\lambda(d, \beta)$ as $\beta \to \infty$ is difficult, because
  in this case,
  the functions involved cannot be expanded into Taylor series. Therefore, we
  perform a change of variables in equation \eqref{eq:lambda-beta} by setting
  \begin{equation}\label{eq:varchange}
    \beta = \tau^{-1}, \quad \lambda = \tau^{-1}\gamma.
  \end{equation}
  In terms of the new variables $\tau$ and $\gamma$, equation
  \eqref{eq:lambda-beta} takes the form
  \begin{equation}\label{E:DefF}
    \tilde{F}(\gamma, \tau) := \frac{1}{(2\pi)^d}
    \int\limits_{[-\pi, \pi]^d} \frac{1}{\gamma -
    \tau\phi_d(\theta)} \, d\theta = 1.
  \end{equation}
  Let $\gamma = \gamma(d, \tau)$ be the solution to this equation,
  expressed as
  a function of $\tau$. Then, by virtue of \eqref{eq:varchange}, the functions
  $\lambda(d, \beta)$ and $\gamma(d, \tau)$ are related by the
  following simple
  expressions:
  \[
    \gamma(d, \tau) = \tau\lambda(d, \tau^{-1}) \quad
    \Longleftrightarrow \quad \lambda(d, \beta) = \beta\gamma(d, \beta^{-1}).
  \]
  Consequently, the problem of finding the asymptotic behavior of $\lambda(d,
  \beta)$ as $\beta \to \infty$ reduces to the analogous problem of
  finding the
  asymptotic behavior of $\gamma(d, \tau)$ as $\tau \to 0$. And the final
  problem, as shown below, reduces to the standard problem of finding an
  implicit function.

  Indeed, since
  \[
    \tilde{F}(1, 0)=1,
  \]
  equation~\eqref{E:DefF} obtained above has a unique solution $\gamma=1$ at
  $\tau=0$. Moreover, the function $\tilde{F}(\gamma, \tau)$ satisfies the
  conditions of the classical implicit function theorem~\cite{KF81:r}:
  $\tilde{F}(\gamma, \tau)$ is infinitely differentiable with respect to both
  variables, and
  \[
    \tilde{F}^{\prime}_{\gamma}(1, 0)=-1\neq 0,
  \]
  which is crucial for applying the implicit function theorem. In
  this case, by
  the implicit function theorem, equation~\eqref{E:DefF} defines
  $\gamma$ as an
  infinitely continuously differentiable function of $\tau$ in some
  neighborhood of the point $\tau=0$:
  \begin{equation}\label{E:Fimplicit}
    \tilde{F}(\gamma (\tau), \tau)\equiv 1 \quad \text{with} \quad
    \gamma (0)=1.
  \end{equation}
  To find $\gamma^{\prime}_{\tau}(0)$ we differentiate
  identity~\eqref{E:Fimplicit}:
  \[
    \tilde{F}^{\prime}_{\gamma}(\gamma (\tau),
    \tau)\gamma^{\prime}(\tau)+\tilde{F}^{\prime}_{\tau}(\gamma
    (\tau), \tau)\equiv 0,
  \]
  whence for $\tau=0$ we get
  \[
    \tilde{F}^{\prime}_{\gamma}(1,
    0)\gamma^{\prime}(0)+\tilde{F}^{\prime}_{\tau}(1, 0)=0.
  \]
  Therefore,
  \[
    \gamma^{\prime}(0)=-\frac{\tilde{F}^{\prime}_{\tau}(1,
    0)}{\tilde{F}^{\prime}_{\gamma}(1, 0)},
  \]
  which gives us the following representation of the function $\gamma(\tau)$:
  \begin{equation}\label{E:gamma tau}
    \gamma(\tau)=\gamma(0)+\gamma^{\prime}(0)\tau+O(\tau^{2})=1+\gamma^{\prime}(0)\tau+O(\tau^{2}).
  \end{equation}
  To find $\gamma^{\prime}(0)$, it remains for us to calculate two quantities:
  $\tilde{F}^{\prime}_{\gamma}(1,0)$ and $\tilde{F}^{\prime}_{\tau}(1,0)$.
  Based on definition~\eqref{E:DefF} of the function
  $\tilde{F}(\gamma, \tau)$,
  we have
  \[
    \tilde{F}^{\prime}_{\gamma}(\gamma, \tau)=-\frac{1}{(2\pi)^d}
    \int\limits_{[-\pi,\pi]^d} \frac{1}
    {(\gamma-\tau\phi_{d}(\theta))^{2}}\,d\theta,
  \]
  and, consequently, $\tilde{F}^{\prime}_{\gamma}(1, 0)=-1$. Again, based on
  definition~\eqref{E:DefF}, we have
  \[
    \tilde{F}^{\prime}_{\tau}(\gamma, \tau)=\frac{1}{(2\pi)^d}
    \int\limits_{[-\pi,\pi]^d}
    \frac{\phi_{d}(\theta)}
    {(\gamma-\tau\phi_{d}(\theta))^{2}}\,d\theta,
  \]
  where
  \begin{align}\nonumber
    \tilde{F}^{\prime}_{\tau}(1, 0)& =
    \frac{1}{(2\pi)^d} \int\limits_{[-\pi,\pi]^d}
    \phi_{d}(\theta)\,d\theta=
    \\
    \nonumber & =\frac{1}{(2\pi)^d}
    \int\limits_{[-\pi,\pi]^d} \sum_{x\neq
    0}a_d(x)(\cos(x_{1}\theta_{1}+\ldots+x_{d}\theta_{d})-1)\,d\theta= \\
    \nonumber & = \frac{1}{(2\pi)^d}\sum_{x\neq 0}
    a_d(x)
    \int\limits_{[-\pi,\pi]^d}\cos(x_{1}\theta_{1}+\ldots+x_{d}\theta_{d})
    \,d\theta_{1}\cdots d\theta_{d} -
    \\
    \label{F:prime tau}
    & \quad-\sum_{x\neq 0}a_d(x).
  \end{align}
  Here, each integral under the summation sign vanishes, and the last sum
  equals $-a_{d}(0)$ by virtue of axiom $2^{(d)}$;
  consequently,~\eqref{F:prime
  tau} takes the form:
  \[
    \tilde{F}^{\prime}_{\tau}(1, 0)=a_d(0).
  \]
  Thus, from~\eqref{E:gamma tau}, we obtain
  \[
    \gamma(\tau)=1+a_d(0)\tau+O(\tau^{2}),
  \]
  where, we recall, $a_d(0)<0$. From this, reverting from the variables
  $\gamma$ and $\tau$ to the variables $\lambda$ and $\beta$, we obtain
  \[
    \lambda(d,\beta)=\beta\gamma(\beta^{-1})=\beta+a_d(0)+O(\beta^{-1}).
  \]
  Theorem~\ref{T:7} is proved.
\end{proof}

\begin{corollary}\label{C:3}  Suppose condition~\eqref{P:1} holds;
  then, as $\beta \to \infty$, for each
  $d\in \mathbb{N}$, the following relation holds:
  \[
    \lim_{\beta \to
    \infty}(\lambda_{0}(1,\beta)-\lambda_{0}(d,\beta))=a_{1}(0)-a_d(0).
  \]

  If, however, condition~\eqref{HT} holds, then as $\beta \to
  \infty$, for each
  $d\in \mathbb{N}$ and each $\alpha\in (0,2)$, the following relation holds:
  \[
    \lim_{\beta \to
    \infty}(\lambda_{0}(1,\beta)-\lambda_{0}(d,\beta))=a_{1,\alpha}(0)-a_{d,\alpha}(0).
  \]
\end{corollary}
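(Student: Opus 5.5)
Corollary~\ref{C:3} follows directly from Theorem~\ref{T:7}, applied separately in dimension $1$ and in dimension $d$, with the two expansions subtracted. Fix $d\in\mathbb{N}$ and assume first that condition~\eqref{P:1} holds.

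Since Theorem~\ref{T:7} is valid for every $d\in\mathbb{N}$, it applies both on $\mathbb{Z}^{1}$ and on $\mathbb{Z}^{d}$. It yields two representations
\[
  \lambda_{0}(1,\beta)=\beta+a_{1}(0)+R_{1}(\beta),\qquad
  \lambda_{0}(d,\beta)=\beta+a_{d}(0)+R_{d}(\beta),
\]
where $|R_{1}(\beta)|\le C_{1}\beta^{-1}$ and $|R_{d}(\beta)|\le C_{d}\beta^{-1}$ for all sufficiently large $\beta$. Here $\beta\ge\beta_{1}$ and $\beta\ge\beta_{d}$ respectively, and these thresholds come from the neighbourhood of $\tau=0$ supplied by the implicit function theorem in the proof of Theorem~\ref{T:7}.

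One technical point needs checking: that $\lambda_{0}(1,\beta)$ and $\lambda_{0}(d,\beta)$ exist for large $\beta$ and coincide with the branch produced by the implicit function theorem. Existence is clear, because $G_{\lambda}(d,0)\to G_{0}(d,0)>0$ as $\lambda\to0$ (possibly to $+\infty$), so $\beta>\beta_{c}$ once $\beta$ is large. Uniqueness of the positive root of~\eqref{E:Gd}, noted in Theorem~\ref{T:6}, then forces $\lambda_{0}(d,\beta)=\beta\gamma(d,\beta^{-1})$. In particular the quantities $\lambda_{0}(d,\beta)$ appearing in the corollary are exactly those controlled by Theorem~\ref{T:7}.

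For $\beta\ge\max(\beta_{1},\beta_{d})$, subtracting the two expansions gives
\[
  \lambda_{0}(1,\beta)-\lambda_{0}(d,\beta)=a_{1}(0)-a_{d}(0)+R_{1}(\beta)-R_{d}(\beta).
\]
The remainder satisfies $|R_{1}-R_{d}|\le (C_{1}+C_{d})\beta^{-1}\to0$, which gives the stated limit. The heavy-tailed case~\eqref{HT} is word-for-word the same: fix $\alpha\in(0,2)$ and use the second part of Theorem~\ref{T:7}, obtaining the limit $a_{1,\alpha}(0)-a_{d,\alpha}(0)$.

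The only real obstacle is the identification step described above, and it is settled by uniqueness.
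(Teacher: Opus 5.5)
Your proposal is correct and follows the paper's own argument: the paper simply states that the corollary follows directly from Theorem~\ref{T:7}, which is exactly your subtraction of the dimension-$1$ and dimension-$d$ expansions. Your added check that $\lambda_0(d,\beta)$ coincides with the implicit-function branch $\beta\gamma(d,\beta^{-1})$ is sound; it rests on the strict monotonicity of $G_\lambda(d,0)$ in $\lambda$, and the paper leaves this step implicit.
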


\begin{proof}
  The assertion of Corollary~\ref{C:3} follows directly from
  Theorem~\ref{T:7}.
\end{proof}

\section{Conclusion}\label{Concl}

This paper proposes a new approach to describing families of random walks on 
lattices of varying dimensions, based on the concept of a set of random walks 
that is consistent with respect to nested lattices. Within this framework, it 
is proven that when transitioning from $\mathbb{Z}^d$ to $\mathbb{Z}^{d+1}$, 
the intensity $|a_d(0)|$ does not decrease, whereas the Fourier transform 
$\phi_d(\theta)$ decreases monotonically with $d$. These properties hold true 
regardless of whether the jump variance is finite or infinite under 
condition~\eqref{HT}. 

The main result of the paper is an asymptotic formula for the positive
eigenvalue of the mean-population operator in a supercritical branching
random walk with a single branching source:
\[
  \lambda_0(d,\beta) = \beta + a_d(0) + O(\beta^{-1}), \quad \beta\to\infty,
\]
which holds for a wide class of transition intensities, including models
where the random walk jump variance may be infinite.


\end{document}